\definecolor{rblue}{RGB}{39,64,139}
\title[Harish-Chandra's volume formula]{Harish-Chandra's volume formula via \\
Weyl's Law and Euler-Maclaurin formula}
\author{Seunghun Hong}
\address{Mathematisches Institut, Busenstra{\ss}e 3--5, D-37073 G\"ottingen, Germany}
\email{shong@uni-goettingen.de}
\urladdr{diracoperat.org}
\keywords{compact Lie groups, Laplace-Beltrami operator, heat trace expansion, Weyl’s law, Harish-Chandra’s volume formula}
\subjclass[2010]{Primary 58J35, 22C05; Secondary 14M15, 22C05, 53C10, 58J37, 58J50, 58J60}
\theoremstyle{plain}
\newtheorem{thm}{Theorem}[section]
\newtheorem{lem}[thm]{Lemma}
\newtheorem{prop}[thm]{Proposition}
\theoremstyle{definition}
\newtheorem{notation}[thm]{Notation}
\theoremstyle{remark}
\newtheorem*{rmk}{Remark}
\newcommand{\C}{\mathbb{C}}
\newcommand{\Z}{\mathbb{Z}}
\newcommand{\N}{\mathbb{N}}
\newcommand{\grg}{\mathfrak{g}}
\newcommand{\grt}{\mathfrak{t}}
\DeclareMathOperator{\tr}{tr}
\DeclareMathOperator{\Str}{Str}
\newcommand{\vol}{\mathrm{vol}}
\DeclareMathOperator{\Ind}{Ind}
\DeclareMathOperator{\Ad}{Ad}
\DeclareMathOperator{\Td}{Td}
\begin{document}
\begin{abstract}
Harish-Chandra's volume formula shows that the volume of a flag manifold $G/T$, where the measure is induced by an invariant inner product on the Lie algebra of $G$, is determined up to a scalar by the algebraic properties of $G$. This article explains  how to deduce Harish-Chandra's formula from Weyl's law by utilizing the Euler-Maclaurin formula. This approach leads to a mystery that lies under the Atiyah-Singer index theorem.
\end{abstract}
\maketitle

\section{Introduction}

Harish-Chandra's volume formula \cite{harishchandra1}*{Lem.~4, p.~203} calculates the volume of a flag manifold $G/T$ by algebraic means. To wit, suppose the Lie algebra $\mathfrak{g}$ of $G$ is equipped with an $\Ad(G$)-invariant inner product, where $\Ad$ denotes the adjoint representation of $G$ on $\mathfrak{g}$. The inner product determines a unique invariant metric on $G$. Endow the quotient measure\footnote{%
	Let $p\colon G\to G/T$ be the canonical projection.
	If $\mu_G$ is a measure on $G$, then
	the quotient measure $\mu_{G/T}$ on $G/T$ is 
	defined by $\mu_{G/T}(U) = \mu_G(p^{-1}(U))$
	for every open subset $U$ of $G/T$.%
	} %
on $G/T$. Then 
\[ 
 \vol(G/T) =\prod_{\alpha\in\Phi^+}\frac{2\pi}{\langle\alpha,\rho\rangle}.
\]
Here $\Phi^+$ is the set of (selected) positive roots of $G$; $\rho$ is half the sum of the positive roots; and $\langle \cdot , \cdot \rangle$ is the inner product on the dual space $\grt^*$ of the Lie algebra of $T$ induced by the inner product on $\grg$. Since an invariant inner product on $\mathfrak{g}$ is unique up to a scalar factor, the formula shows that the volume is essentially determined by the algebraic properties of $G$. Apart from this, a significance of the volume of $G/T$ can be found in its appearance in the \mbox{Weyl} integration formula \cite{weyl1}*{$\S$~6}.

Proofs for the volume formula other than Harish-Chandra's own can be found in \mbox{Berline}, \mbox{Getzler} and \mbox{Vergne}~\cite{bgv}*{Cor.~7.27, p.~230}, \mbox{Duistermaat} and \mbox{Kolk}~\cite{duistermaatkolk}*{Eq.~3.14.13, p.~192}, \mbox{Fegan}~\cite{fegan}*{Thm.~1.4, p.~591}, Flensted-Jensen~\cite{flensted-jensen}*{Eq.~3.9, p.116}, and \mbox{Macdonald}~\cite{macdonald}*{p.~95}. The ones that are close to our approach would be that of \mbox{Duistermaat} and \mbox{Kolk}, and \mbox{Fegan}, as they explicitly depend on the property of the heat trace. But \mbox{Duistermaat} and \mbox{Kolk} exploits the \mbox{Weyl} integration formula, while \mbox{Fegan} relies on the \mbox{Poisson} summation formula. Our approach differs from them in that we utilize the Euler-Maclaurin formula; as the Euler-Maclaurin formula relates sums with integrals, the \mbox{Weyl} integration formula also plays a natural role.

Here is the outline of our proof. The starting point is what is known as \emph{Weyl's law} for closed manifolds, namely that, if $\Delta$ is the Laplacian of a closed Riemannian manifold $M$ of dimension $n$, then its \emph{heat trace}
\[ Z(t):= \tr(e^{t\Delta}), \]
for $t>0$, satisfies the asymptotic equality:
\begin{equation}
(4\pi t)^{n/2}Z(t) = \vol(M) + O(t),\label{eq:weylslaw}
\end{equation}
for $t\to 0+$ (\mbox{Minakshisundaram} and Pleijel~\cite{minak}). Now, as a result of Peter-Weyl theorem and Schur's lemma, the spectrum of the Laplacian is parametrized by the unitary dual $\hat G$ of $G$, and we have:
\[ Z(t) =\sum_{u\in\hat G} \dim(u)^2 e^{tC_u},\]
where $C_u$ is a constant that depends on  $u$. According to the representation theory of compact Lie groups, there is a one-to-one correspondence between $\hat G$ and $\Lambda\cap K$, where $\Lambda$ and $K$, respectively, are a certain lattice and cone in $\mathfrak{t}^*$. Hence,
\[ Z(t) =\sum_{\lambda\in\Lambda\cap K} d(\lambda)^2 e^{t\Omega_\lambda},\]
where $d(\lambda)$ is some function on $\mathfrak{t}^*$, and $\Omega_\lambda$ is a constant that depends on $\lambda$.
Applying the Euler-Maclaurin formula, we obtain the asymptotic equality:
\begin{equation}
 t^{n/2}Z(t)= t^{n/2}I(t) +O(t) \label{eq:httreumc}
\end{equation}
for $t\to0+$, where
\[ I(t)= \frac{\vol(T)}{(2\pi
)^{\dim(T)}}\int_{K} d(\lambda)^2e^{-t\|\lambda\|^2}\,d\lambda. \]
Owing to the invariance of $d(\lambda)^2$ and $\|\lambda\|$ relative to the Weyl group action, the domain of the above integral can be extended to whole $\mathfrak{t}^*$. Then, with the aid of Weyl integration formula, we arrive at:
\[ I(t) =  \frac{\vol(T)^2}{(2\pi)^{\dim(T)}\vol(G)}\Bigl(\prod_{\alpha\in\Phi^+}\frac{1}{\langle\alpha,\rho\rangle^2}\Bigr)
	\int_{\grg}  e^{-t\|X\|^2} \,dX. 
\]
The last Gaussian integral is easily evaluated. Comparing the two asymptotic equations~\eqref{eq:weylslaw} and~\eqref{eq:httreumc} then yields Harish-Chandra's formula. 

The first significance of this line of reasoning would be that once we are equipped with few key results in Lie theory and differential geometry, namely, the representation theory of compact Lie groups and Weyl's law, we can attain Harish-Chandra's formula by an elementary means of the Euler-Maclaurin formula. A greater significance might still lie ahead in the context of index theory. We briefly indicate this in the concluding remarks.

\section{Acknowledgement}
This work was done as a partial fulfillment of the requirements for the author's doctoral degree. He wishes to express his heartfelt gratitude to his  advisor Nigel Higson.

\section{Preliminary Remarks} \label{par:cptlgpstrth1} 
Throughout this article, $G$ is a compact connected Lie group, and $T$ is a maximal torus of $G$. The measure on $G/T$ under consideration is the quotient measure coming from an invariant measure on $G$. An invariant measure on $G$ is unique up to a scalar factor; moreover, it is induced by a bi-invariant metric on $G$, which is in turn induced by an $\Ad(G)$-invariant inner product on $\mathfrak{g}$. Henceforth we fix such an inner product and denote it by $\langle \cdot , \cdot \rangle$, and endow $G$ and $T$ the invariant metrics induced by it. Then we have
\[ \vol(G/T)=\frac{\vol(G)}{\vol(T)}.\]

We point out that, as far as $\vol(G/T)$ is concerned, we may further assume, without loss of generality, that the compact connected Lie group $G$ is semisimple and simply connected, for the following reasons. By the general theory of compact Lie groups, every compact connected Lie group $G$ satisfies an isomorphism
\begin{equation}
 G\cong (R\times S)/F, \label{eq:strcthmcclieg}
\end{equation}
where $R$ is a torus, $S$ is a compact, connected, simply connected, semisimple Lie group, and $F$ is a finite abelian subgroup of $R\times S$ (see Knapp~\cite{knapp}*{Thm.~4.29, p.~250}). Thus $F$ is contained in a maximal torus $\widetilde T$ of $R\times S$. Then $\widetilde T/F$ is a maximal torus of $(R\times S)/F$; let $T$ be the corresponding maximal torus of $G$ under the isomorphism~\eqref{eq:strcthmcclieg}. The Lie groups $G$, $R\times S$, and $(R\times S)/F$ all have isomorphic Lie algebras.  Hence the $\Ad(G)$-invariant inner product on $\grg$ induces a bi-invariant metric on the Lie groups $G$, $R\times S$, $(R\times S)/F$, and their respective maximal tori. Their volumes satisfy:
\[
\frac{\vol(R\times S)}{\vol(\widetilde T)}=\frac{\vol((R\times S)/F)}{\vol(\widetilde T/F)}=\frac{\vol(G)}{\vol(T)}.
\]
Therefore, for our purpose, we may assume that $F$ is trivial, so that $G\cong R\times S$. Now the maximal torus $\widetilde T$ of $R\times S$ is of the form $R\times T_S$, where $T_S$ is a maximal torus of $S$. Hence,
\[
 \frac{\vol(R\times S)}{\vol(\widetilde T)}= \frac{\vol(R\times S) }{\vol(R\times T_S)}=\frac{\vol(S)}{\vol(T_S)}.
\]
This shows that we may as well assume that $G=S$, that is, $G$ is semisimple and simply connected.\label{par:cptlgpstrth2}

\section{Euler-Maclaurin Formula}

Let $f$ be a smooth function on the real line. The Euler-Maclaurin formula  relates a sum $\sum_{x=0}^nf(x)$ to the integral $\int^n_0f(x)\,dx$ (Euler~\cites{eulerm1,eulerm2}, Maclaurin~\cite{emaclaurin}); precisely stated, for $N\in\N$,
\begin{equation}
 \sum_{x=0}^nf(x)=\int^n_0 f (x)\,dx 
	+\sum^N_{q=1}(-1)^q\frac{B_{q}}{q!} \bigl(f^{(q-1)}(n)-f^{(q-1)}(0)\bigr) +R_N,\label{eq:eulermac}
\end{equation}
where the coefficients $B_{q}$ are the Bernoulli numbers defined by the power series
\[
 \Td(x):= \frac{x}{1-e^{-x}}=\sum^\infty_{q=0}(-1)^q\frac{B_{q}}{q!}x^q, 
\]
and  $R_N$ is the remainder term, which can be estimated by
\begin{equation}
 |R_N|\le \frac{2\zeta(N)}{(2\pi)^{N}} \int^n_0\bigl|f^{(N)}(x)\bigr|\,dx.\label{eq:eulermaclaruinrem}
\end{equation}
Here $\zeta$ is the Riemann zeta function. Suppose $\sum^{\infty}_{x=0}f(x)$ exists and $\lim_{x\rightarrow\infty}f^{(q)}(x)=0$ for all $q\in\N$. Then we may set $n=\infty$ in Equation~\eqref{eq:eulermac}, which yields:
\begin{equation}
 \sum_{x=0}^\infty f(x)=\int^\infty_0 f (x)\,dx 
	+\sum^N_{q=1}(-1)^q\frac{B_{q}}{q!} f^{(q-1)}(0) +R_N.\label{eq:eulermacinfty}
\end{equation}
If, furthermore, $R_N\rightarrow0$ as $N\rightarrow\infty$ (for instance, when $f$ is a polynomial), we have:
\[
\sum_{x=0}^\infty f(x) = \Bigl.\Td\Bigl(\frac{\partial}{\partial h} \Bigl)\Bigr|_{h=0}\int^\infty_{-h}f(x)\,dx. 
\]
This expression for the Euler-Maclaurin formula first appeared in \mbox{Pukhlikov} and \mbox{Khovanski}~\cite{tdeulmac}.

The following lemma can be proved using the Euler-Maclaurin formula. We shall make use of it later on.
\begin{lem}\label{lem:eulmacprlm} Let $A$ and $B$ be real numbers with $A>0$, and let $m$ be any nonnegative integer. Let 
\[
 f_t(x) := x^{2m}e^{-t(Ax^2+Bx)}.  
\]
Consider the sum
\[
 S(t) :=\sum_{x=0}^\infty f_t(x)
\]
for $t>0$. Then,
\[
 t^{m+1/2}S(t)= t^{m+1/2}\int^\infty_0 f_t(x)\,dx + O(t^{1/2}) 
\]
for $t\rightarrow0+$.
\end{lem}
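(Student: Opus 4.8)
The plan is to apply the Euler-Maclaurin formula in the form~\eqref{eq:eulermacinfty} to the function $f_t$, with a modest choice of $N$ (say $N=2$), and then estimate each resulting term as $t\to0+$. First I would verify the hypotheses: for fixed $t>0$ the function $x\mapsto f_t(x)=x^{2m}e^{-t(Ax^2+Bx)}$ is smooth on $[0,\infty)$, the series $\sum_{x=0}^\infty f_t(x)$ converges (Gaussian decay since $A>0$), and $f_t^{(q)}(x)\to0$ as $x\to\infty$ for every $q$, so~\eqref{eq:eulermacinfty} is legitimate. This gives
\[
 S(t)=\int_0^\infty f_t(x)\,dx+\sum_{q=1}^{N}(-1)^q\frac{B_q}{q!}f_t^{(q-1)}(0)+R_N,
\]
and the goal reduces to showing that both $\sum_{q=1}^{N}(-1)^q\frac{B_q}{q!}f_t^{(q-1)}(0)$ and $R_N$ are $O(t^{-m})$ as $t\to0+$, since multiplying through by $t^{m+1/2}$ then turns the $O(t^{-m})$ error into the claimed $O(t^{1/2})$.

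The boundary terms are the easier part. Each derivative $f_t^{(q-1)}(0)$ is, by the Leibniz rule applied to $x^{2m}\cdot e^{-t(Ax^2+Bx)}$, a finite sum of terms of the form (constant)$\,\cdot t^j$ times a monomial in $A,B$ evaluated at $x=0$; because of the factor $x^{2m}$, only terms where at least $2m$ derivatives hit $x^{2m}$ survive at $x=0$, and the remaining derivatives on the exponential each bring down a power of $t$. Hence $f_t^{(q-1)}(0)$ is a polynomial in $t$ whose lowest-order term is of order $t^{0}$ only when $q-1=2m$ — more precisely one checks $f_t^{(q-1)}(0)=0$ for $q-1<2m$ and $f_t^{(q-1)}(0)=O(1)$ for $q-1\ge 2m$. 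In any case each boundary term is $O(1)=O(t^{-m})$ for $t\to 0+$, uniformly for $t$ in a bounded interval, so the whole finite sum $\sum_{q=1}^N$ contributes $O(t^{-m})$; in fact it is $O(1)$, which is even better than needed, but it is cleanest just to record it as $O(t^{-m})$.

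The main obstacle is controlling the remainder $R_N$ via~\eqref{eq:eulermaclaruinrem}, which requires estimating $\int_0^\infty|f_t^{(N)}(x)|\,dx$ and showing it is $O(t^{-m})$. I would expand $f_t^{(N)}$ by Leibniz: it is a sum of terms $c_{k}\,x^{2m-k}\,(\partial_x)^{N-k}e^{-t(Ax^2+Bx)}$ for $0\le k\le \min(N,2m)$, and each $(\partial_x)^{N-k}e^{-t(Ax^2+Bx)}$ equals $e^{-t(Ax^2+Bx)}$ times a polynomial in $x$ whose coefficients are polynomials in $t$, with every monomial carrying at least one factor of $t$. Thus $f_t^{(N)}(x)$ is bounded in absolute value by $e^{-t(Ax^2+Bx)}$ times a finite sum of terms of the form $C\,t^{j}\,|x|^{\ell}$ with $j\ge1$. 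Now substitute $x=u/\sqrt t$ (and complete the square in the exponent, noting $Ax^2+Bx\ge \tfrac{A}{2}x^2 - C'$ for some constant $C'$ once $|x|$ is large, with the bounded region near the origin handled trivially) to get $\int_0^\infty t^{j}|x|^{\ell}e^{-t A x^2/2}\,dx = C'' t^{j}t^{-(\ell+1)/2}$. One then checks that the exponent $j-(\ell+1)/2$ arising from each Leibniz term is at least $-m$: the worst case is $k=0$, where $\ell$ can be as large as $2m+(N)$ coming from the polynomial factor of degree $\le N$ times $x^{2m}$, but then $j$ is at least $\lceil N/2\rceil$ or so, and one verifies term by term (the cleanest bookkeeping: the term with $p$ factors of $\partial_x$ hitting the exponential contributes roughly $t^{\lceil p/2\rceil}$ and a polynomial of degree $\le p$, so after rescaling the power of $t$ is $\ge \lceil p/2\rceil-(p+2m+1)/2 \ge -m-1/2$, and integrating near $0$ only improves this). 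Summing the finitely many terms gives $\int_0^\infty|f_t^{(N)}(x)|\,dx=O(t^{-m})$, hence $R_N=O(t^{-m})$ by~\eqref{eq:eulermaclaruinrem}. Combining the two estimates and multiplying by $t^{m+1/2}$ yields $t^{m+1/2}S(t)=t^{m+1/2}\int_0^\infty f_t(x)\,dx+O(t^{1/2})$, as claimed.
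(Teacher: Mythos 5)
Your overall strategy is the one the paper uses: apply the Euler--Maclaurin formula \eqref{eq:eulermacinfty} to $f_t$, note that the finitely many boundary terms are $O(1)$, and show the remainder is $O(t^{-m})$ by rescaling $x\mapsto x/\sqrt t$ in $\int_0^\infty|f_t^{(N)}(x)|\,dx$. (The paper takes $N=1$ and simply writes out $f_t'$ explicitly, which reduces the bookkeeping to two terms of degrees $2m-1$ and $2m+1$; your choice $N=2$ has the incidental advantage that the constant $2\zeta(N)/(2\pi)^N$ in \eqref{eq:eulermaclaruinrem} is then finite.)

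However, your remainder estimate does not close as written. You bound each Leibniz term by taking the worst power of $t$ (namely $t^{\lceil p/2\rceil}$) and the worst polynomial degree (namely $p+2m$) \emph{independently}, and the exponent you then compute, $\lceil p/2\rceil-(p+2m+1)/2\ge -m-\tfrac{1}{2}$, only yields $\int_0^\infty|f_t^{(N)}|=O(t^{-m-1/2})$, hence $t^{m+1/2}R_N=O(1)$ --- not the $O(t^{1/2})$ the lemma asserts. The jump from ``$\ge -m-1/2$'' to ``$=O(t^{-m})$'' in your final paragraph is therefore unjustified as stated. The fix is to track the correlation between the power of $t$ and the degree in $x$: a term of $(\partial_x)^p e^{-t(Ax^2+Bx)}$ built from $a$ factors of $-t(2Ax+B)$ and $b$ factors of $-2tA$ carries $t^{a+b}$ and has degree at most $a$, with $a+2b=p$; since $2(a+b)-a=p$, every such term scales exactly like $t^{p/2}$ under $x\mapsto x/\sqrt t$. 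Carrying this through gives $\int_0^\infty|f_t^{(N)}|=O(t^{(N-2m-1)/2})$, which is $O(t^{-m})$ for every $N\ge1$ (and $O(t^{1/2-m})$ for your $N=2$), and the lemma follows. With that correction your argument is sound and essentially identical to the paper's.
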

\begin{proof}
By Equation~\eqref{eq:eulermacinfty} with $N=1$,
\[
 S(t)=\int^\infty_0 f_t (x)\,dx 
	- B_{1} f_t(0) +R_1.
\]
By the estimate~\eqref{eq:eulermaclaruinrem}, the absolute value of $R_1$ is bounded by, up to a constant factor,
\[
   \int^\infty_0 \Bigl|
	2mx^{2m-1}e^{-t(Ax^2+Bx)} -t (2Ax^{2m+1}+Bx^{2m})e^{-t(Ax^2+Bx)}\Bigr|\,dx.
\]
By a change of variable of the type $x\mapsto \alpha x+\beta$, $\alpha>0$, the above integral can be recast in the following form:
\[
 \int^\infty_\alpha \Bigl|P_1(x)e^{-tx^2} +tP_2(x)e^{-tx^2}\Bigr|\, dx,
\]
where $P_1(x)$ and $P_2(x)$ are polynomials of degree $2m-1$ and $2m+1$, respectively. This integral is bounded by, up to a constant factor,
\begin{equation}
\int^\infty_{-\infty} |P_1(x)|e^{-tx^2}dx +t\int^\infty_{-\infty}|P_2(x)|e^{-tx^2}\, dx.
\label{eq:eulmaclem}
\end{equation}
It is known that, for a nonnegative integer $n$,
\[
\int^\infty_{-\infty} |x^n|e^{-tx^2}dx = \frac{1}{t^{(n+1)/2}}\Gamma\Bigl(\frac{n+1}{2}\Bigr),
 \]
where $\Gamma$ denotes the Gamma function. Hence, we see that the integral~\eqref{eq:eulmaclem} is bounded by a quantity that is of $O(t^{-m})$ for $t\rightarrow0+$. Meanwhile, the integral $\int_{0}^\infty f_t(x)\,dx$ is of $O(t^{-m-1/2})$. Hence,
\[
 t^{m+1/2}S(t) = t^{m+1/2}\int^{\infty}_0f_t(x)\,dx + O(t^{1/2}).\qedhere
\]
\end{proof}

\section{Proof of Harish-Chandra's  Formula} \label{par:introgencase} 

\begin{notation}
We use the following notation for the dimensions of $G$, $T$, and $G/T$:
\[
 n:=\dim G,\quad r:=\dim T,\quad 2m:=\dim (G/T)=n-r.
\]
We denote by $\Phi^+$ the selected set of positive roots of $\mathfrak{g}$, and set $\rho=\frac{1}{2}\sum_{\alpha\in\Phi^+}\alpha$. The selection of $\Phi^+$ amounts to choosing a Weyl chamber in $\mathfrak{t}^*$, which we designate by $K$ and refer to as the fundamental Weyl chamber. The fundamental weights are denoted by $\{\lambda_i\}_{i=1}^r$. The lattice of (analytically) integral weights is represented by $\Lambda$. For each $\lambda\in\Lambda\cap K$, we denote by $V_\lambda$ the irreducible $G$-vector space with highest weight $\lambda$.  (For reference on the terminologies, see \mbox{Duistermaat} and \mbox{Kolk}~\cite{duistermaatkolk}*{Chs. 3--4}.)
\end{notation}
\begin{rmk}
Let $\mathfrak{p}$ be the orthogonal complement of $\mathfrak{t}$ in $\mathfrak{g}$. The complexification $\mathfrak{p}_\C$ of $\mathfrak{p}$ is the direct sum of root spaces of $\mathfrak{g}$. Since half of the roots are positive, we have
\begin{equation}
m = |\Phi^+|.
\label{eq:dimgmtposrt}
\end{equation}
\end{rmk}

\begin{prop}\label{prop:cptliehttrpr}
For the heat trace $Z(t)$ of the Laplacian on $G$, we have:
\[ Z(t) =\sum_{\lambda\in \Lambda\cap K}\dim(V_\lambda)^2 e^{tC_\lambda},\]
where
\[ C_\lambda=-\|\lambda+\rho\|^2+\|\rho\|^2.\]
(Here $\|\cdot \|$ is the norm on $\mathfrak{t}^*$ induced by the inner product on $\mathfrak{g}$.)
\end{prop}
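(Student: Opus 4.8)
The plan is to combine the Peter--Weyl decomposition of $L^2(G)$ with Schur's lemma and the classical formula for the Casimir eigenvalue. First I would recall that, since the metric on $G$ is bi-invariant and induced by $\langle\cdot,\cdot\rangle$, the Laplace--Beltrami operator $\Delta$ is, up to a sign convention, the image under the (left) regular representation of the Casimir element $\Omega=\sum_i X_i^2\in U(\grg)$, where $\{X_i\}$ is an orthonormal basis of $\grg$; by $\Ad(G)$-invariance of $\langle\cdot,\cdot\rangle$ the same operator is obtained from the right-invariant vector fields, so $\Delta$ commutes with both the left and the right translation actions of $G$.

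Next, invoke the Peter--Weyl theorem: as a $G\times G$-representation, $L^2(G)\cong\bigoplus_{\lambda\in\Lambda\cap K}V_\lambda\otimes V_\lambda^*$, the summand being spanned by the matrix coefficients of $V_\lambda$, with $(g_1,g_2)$ acting by $v\otimes\xi\mapsto g_1v\otimes g_2\xi$. The operator $\Delta$ preserves each summand, and on $V_\lambda\otimes V_\lambda^*$ it acts through the left action alone, i.e.\ as $(\text{Casimir on }V_\lambda)\otimes\mathrm{id}$. By Schur's lemma the Casimir acts on the irreducible $V_\lambda$ as a scalar, which I will call $C_\lambda$; hence $e^{t\Delta}$ acts on this $(\dim V_\lambda)^2$-dimensional space as multiplication by $e^{tC_\lambda}$. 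Taking the trace summand by summand---legitimate because $e^{t\Delta}$ is trace class for $t>0$, the heat kernel being smooth---yields $Z(t)=\sum_{\lambda\in\Lambda\cap K}\dim(V_\lambda)^2e^{tC_\lambda}$.

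It remains to identify $C_\lambda$. This is the classical computation of the Casimir eigenvalue: extending scalars to $\grg_\C$ and using the root space decomposition $\grg_\C=\grt_\C\oplus\bigoplus_{\alpha\in\Phi}\grg_\alpha$, one expresses $\Omega$ through an orthonormal basis of $\grt$ together with root vectors $E_{\pm\alpha}$ normalized so that $[E_\alpha,E_{-\alpha}]$ corresponds to $\alpha$ under $\langle\cdot,\cdot\rangle$, and then evaluates on a highest weight vector of $V_\lambda$. The positive root vectors annihilate it, the terms $E_\alpha E_{-\alpha}$ contribute $\sum_{\alpha\in\Phi^+}\langle\lambda,\alpha\rangle=\langle\lambda,2\rho\rangle$, and the toral part contributes $\langle\lambda,\lambda\rangle$, the inner product being transported to $\grt^*$---all up to one overall sign. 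That sign is pinned down by the requirement, implicit in the form of Weyl's law~\eqref{eq:weylslaw}, that $\Delta$ have nonpositive spectrum, which forces $C_\lambda=-\bigl(\|\lambda\|^2+2\langle\lambda,\rho\rangle\bigr)=-\|\lambda+\rho\|^2+\|\rho\|^2$.

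I expect the only real obstacle to be bookkeeping of conventions: fixing the sign of $\Delta$ relative to $\Omega$ and checking that the norm appearing in the statement is precisely the one induced on $\grt^*$ by $\langle\cdot,\cdot\rangle$ on $\grg$. Once these normalizations are settled, each step is either a standard citation (Peter--Weyl, Schur, Freudenthal's formula) or a short computation.
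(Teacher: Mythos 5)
Your proposal is correct and follows essentially the same route as the paper: Peter--Weyl decomposition of $L^2(G)$, identification of $\Delta$ with the Casimir acting on each isotypic block, Schur's lemma, and the standard highest-weight computation $C_\lambda=-\langle\lambda,\lambda+2\rho\rangle=-\|\lambda+\rho\|^2+\|\rho\|^2$ (which the paper simply cites from Knapp). The only quibble is that the eigenvalue formula you sketch is usually attributed to the Casimir/highest-weight calculation directly rather than to Freudenthal's formula, but the computation itself is right.
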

\begin{proof}
Owing to the representation theory of compact Lie groups, there is a one-to-one correspondence between the unitary dual $\hat G$ and $\Lambda\cap K$. Now the Peter-Weyl theorem states that we have a $(G\times G)$-equivariant Hilbert space isomorphism (\mbox{Peter} and \mbox{Weyl}~\cite{peterweyl}):
\begin{equation}
L^2(G,\C) \cong  \bigoplus_{\lambda\in\Lambda\cap K} {V_\lambda^*\otimes V_\lambda}. 
\label{eq:peterweyl}
\end{equation}
Here $L^2(G,\C)$ denotes the space of square-integrable complex-valued functions on $G$. As a consequence of the invariance of the metric on $G$, the action induced on the right-hand side of~\eqref{eq:peterweyl} by the Laplacian is that of $\mathbf{1}\otimes \Omega$, where $\Omega$ denotes the Casimir element in the universal enveloping algebra of $\mathfrak{g}$. By Schur's lemma,  the action of $\Omega$ on $V_\lambda$ is by a constant, say $C_\lambda$, whose value is well-known to be as asserted in the proposition (see, for instance, Knapp~\cite{knapp}*{p.~295}).
\end{proof}

\begin{prop}\label{prop:modihtr2}
We have:
\[ Z(t) = e^{t\|\rho\|^2}\sum_{\lambda\in \Lambda\cap K} d(\lambda)^2 e^{-t\|\lambda\|^2}, 
\]
where $d$ is a function on $\mathfrak{t}^*$ defined by
\[ d(\lambda) := \frac{\prod_{\alpha\in\Phi^+}\langle \alpha,\lambda\rangle}{\prod_{\alpha\in\Phi^+}\langle \alpha,\rho\rangle}.\]
(Here we are using, with a slight abuse of notation, $\langle \cdot , \cdot \rangle$ to denote the inner product on $\mathfrak{t}^*$ induced by that on $\mathfrak{g}$.)
\end{prop}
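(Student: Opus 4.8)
The plan is to obtain this as an immediate consequence of Proposition~\ref{prop:cptliehttrpr} together with the \emph{Weyl dimension formula}. Recall that the latter expresses the dimension of the irreducible representation with highest weight $\lambda$ as
\[ \dim(V_\lambda)=\prod_{\alpha\in\Phi^+}\frac{\langle\alpha,\lambda+\rho\rangle}{\langle\alpha,\rho\rangle}, \]
which, in the notation of the statement, is precisely $\dim(V_\lambda)=d(\lambda+\rho)$. On the other hand, $C_\lambda=-\|\lambda+\rho\|^2+\|\rho\|^2$ gives $e^{tC_\lambda}=e^{t\|\rho\|^2}e^{-t\|\lambda+\rho\|^2}$. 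Substituting both identities into the expression for $Z(t)$ furnished by Proposition~\ref{prop:cptliehttrpr}, and using that the series converges absolutely for $t>0$, we obtain
\[ Z(t)=e^{t\|\rho\|^2}\sum_{\lambda\in\Lambda\cap K} d(\lambda+\rho)^2\,e^{-t\|\lambda+\rho\|^2}. \]

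Next I would re-index the sum via $\mu=\lambda+\rho$. Because $\rho=\sum_{i=1}^r\lambda_i$ is a sum of fundamental weights, it lies in $\Lambda$ --- here we use that, by the reductions of Section~\ref{par:cptlgpstrth1}, $G$ may be taken simply connected, so that $\Lambda$ is the full weight lattice --- and consequently $\rho+\Lambda=\Lambda$. Hence $\lambda\mapsto\lambda+\rho$ is a bijection from $\Lambda\cap K$ onto $\Lambda\cap(\rho+K)$, the set of regular (strictly dominant) integral weights, and
\[ Z(t)=e^{t\|\rho\|^2}\sum_{\mu\in\Lambda\cap(\rho+K)} d(\mu)^2\,e^{-t\|\mu\|^2}. \]

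To finish, I would enlarge the index set from $\Lambda\cap(\rho+K)$ to all of $\Lambda\cap K$. The weights $\mu$ added in doing so are exactly the dominant integral weights lying on some wall of $K$, that is, those with $\langle\alpha,\mu\rangle=0$ for at least one $\alpha\in\Phi^+$; for each of them the corresponding factor in the numerator defining $d(\mu)$ vanishes, so $d(\mu)=0$ and the new terms contribute nothing. This produces the claimed identity. The substantive content is entirely the Weyl dimension formula; the only point demanding a little care is this last step --- verifying that $\rho\in\Lambda$ and that $d$ vanishes on the chamber walls, so that passing between the two index sets is harmless.
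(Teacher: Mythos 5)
Your argument is correct and follows essentially the same route as the paper's proof (which the paper attributes to Fegan): apply the Weyl dimension formula in the form $\dim(V_\lambda)=d(\lambda+\rho)$, shift the index set by $\rho$ to the strictly dominant weights, and then absorb the chamber walls using the vanishing of $d$ there. Your explicit remark that $\rho\in\Lambda$ because $G$ may be taken simply connected is a small point of care the paper leaves implicit, but the substance is identical.
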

\begin{rmk}
The argument presented below is from Fegan~\cite{fegan}*{p.~594}:
\end{rmk}
\begin{proof}
In terms of the function $d$, we may express the Weyl dimension formula as:
\[
 \dim(V_\lambda) = d(\lambda+\rho).
\]
Hence, by Proposition~\ref{prop:cptliehttrpr}, we have
\[
 Z(t)  = e^{t\|\rho\|^2}\sum_{\lambda\in \Lambda\cap K} d(\lambda+\rho)^2 e^{-t\|\lambda+\rho\|^2}.
\]
Note that
\[
\sum _{\lambda\in \Lambda\cap K} d(\lambda+\rho)^2 e^{-t\|\lambda+\rho\|^2}
	=\sum _{\lambda\in (\Lambda\cap K)+\rho} d(\lambda)^2 e^{-t\|\lambda\|^2}.
\]
The shifted index set $(\Lambda\cap K)+\rho$ is the set of weights that lie in the interior $K^\circ$ of the fundamental Weyl chamber. Hence,
\begin{equation}
 Z(t) = e^{t\|\rho\|^2}\sum_{\lambda\in \Lambda\cap K^\circ} d(\lambda)^2 e^{-t\|\lambda\|^2}.
\label{eq:cptlghtrmd}
\end{equation}
But since the boundary $\partial K$ of the fundamental Weyl chamber is contained in the hyperplanes orthogonal to the roots, the restriction of $d$ to $\partial K$ is zero. Hence we may replace the index set $\Lambda\cap K^\circ$ of the sum~\eqref{eq:cptlghtrmd} to $\Lambda\cap K$. This proves the proposition.
\end{proof}

\begin{lem}\label{lem:htrldt}
Let $\mu_{\grt^*}$ denote the Lebesgue measure on $\grt^*$ induced by the inner product $\langle \cdot , \cdot \rangle$. Let $P$ be the fundamental parallelepiped in $\mathfrak{t}^*$ formed by the fundamental weights of $G$.
\begin{enumerate}
\item \label{item:htrldt} For $t\rightarrow0+$, we have
\begin{equation}
 t^{n/2} Z(t) = t^{n/2} I(t) +O(t), \label{eq:httremaprx}
\end{equation}
where
\begin{equation}
 I(t)= \int_{K} d(\lambda)^2e^{-t\|\lambda\|^d}\,\frac{\mu_{\grt^*}(\lambda)}{\vol(P)}. \label{eq:httrapprxint}
\end{equation}

\item \label{item:torusvol} The volume of the maximal torus is related to $\vol(P)$ by
\begin{equation}
\vol(T)= \frac{(2\pi)^r}{\vol(P)}. \label{eq:torusvol}
\end{equation}
\end{enumerate}
\end{lem}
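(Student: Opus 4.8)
Here is the plan. The two assertions of the lemma are essentially independent: \eqref{eq:torusvol} is a statement about the dual lattice, while \eqref{eq:httremaprx}--\eqref{eq:httrapprxint} is where the Euler--Maclaurin formula does its work. I would dispose of part~\eqref{item:torusvol} first. Since $G$ is simply connected and semisimple, the fundamental weights $\lambda_1,\dots,\lambda_r$ form a $\Z$-basis of $\Lambda$, so $\vol(P)$ is precisely the covolume of $\Lambda$ with respect to $\langle\cdot,\cdot\rangle$. On the other hand, the exponential map $\exp\colon\grt\to T$ is a surjective homomorphism and a Riemannian covering for the metric induced by $\langle\cdot,\cdot\rangle$; writing $L:=\ker(\exp|_\grt)$, a full lattice in $\grt$, we thus have $T\cong\grt/L$ and $\vol(T)=\mathrm{covol}(L)$. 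By the definition of an analytically integral weight, $\mu\in\grt^*$ lies in $\Lambda$ exactly when $\mu(L)\subseteq 2\pi\Z$; identifying $\grt^*$ with $\grt$ via $\langle\cdot,\cdot\rangle$, this says $\Lambda=2\pi L^{\vee}$, where $L^{\vee}$ is the lattice dual to $L$. Since $\mathrm{covol}(L^{\vee})=1/\mathrm{covol}(L)$, we get $\vol(P)=\mathrm{covol}(\Lambda)=(2\pi)^r\,\mathrm{covol}(L^{\vee})=(2\pi)^r/\vol(T)$, which rearranges to \eqref{eq:torusvol}.

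For part~\eqref{item:htrldt}, recall from Proposition~\ref{prop:modihtr2} that $Z(t)=e^{t\|\rho\|^2}S(t)$ with $S(t):=\sum_{\lambda\in\Lambda\cap K}d(\lambda)^2e^{-t\|\lambda\|^2}$. Since $e^{t\|\rho\|^2}=1+O(t)$, it is enough to prove that $t^{n/2}S(t)=t^{n/2}I(t)+O(t)$ and that $t^{n/2}I(t)=O(1)$. I would then change coordinates: the linear map $\Phi(x_1,\dots,x_r)=\sum_i x_i\lambda_i$ carries $\R^r_{\ge 0}$ onto $K$ and the lattice points $\Z^r_{\ge 0}$ onto the dominant weights $\Lambda\cap K$, and its Jacobian is exactly $\vol(P)$; hence, writing $R(x):=d(\Phi(x))^2$ and $Q(x):=\|\Phi(x)\|^2$, the integral \eqref{eq:httrapprxint} becomes $I(t)=\int_{\R^r_{\ge 0}}R(x)e^{-tQ(x)}\,dx$, while $S(t)=\sum_{x\in\Z^r_{\ge 0}}R(x)e^{-tQ(x)}$. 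Here $R$ is a polynomial of degree $2m$ (by \eqref{eq:dimgmtposrt}) and $Q$ is a positive-definite quadratic form. The structural point that makes everything work is that each simple root $\alpha_i$ belongs to $\Phi^+$ and contributes to $d$ the linear factor $\langle\alpha_i,\Phi(x)\rangle=\tfrac12\langle\alpha_i,\alpha_i\rangle\,x_i$ --- equivalently, as already noted in the proof of Proposition~\ref{prop:modihtr2}, $d$ vanishes on the walls of $K$. Consequently $R$ is divisible by $x_1^2\cdots x_r^2$, so $R$ and all of its first partial derivatives vanish on every coordinate hyperplane $\{x_i=0\}$.

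The estimate is then obtained by applying the Euler--Maclaurin formula \eqref{eq:eulermacinfty} with $N=2$ successively in the variables $x_1,\dots,x_r$, in the same spirit as the proof of Lemma~\ref{lem:eulmacprlm}. After the first application, the sum over $x_1$ becomes $\int_0^\infty R\,e^{-tQ}\,dx_1$ plus the $q=1,2$ boundary terms plus a remainder $R_2$; the boundary terms are proportional to $R(x)e^{-tQ(x)}$ and $\partial_{x_1}\bigl(R(x)e^{-tQ(x)}\bigr)$ evaluated at $x_1=0$, and these vanish because $R$ is divisible by $x_1^2$. Iterating over the remaining variables --- the divisibility by $x_2\cdots x_r$ survives integration in $x_1$, so the same cancellation recurs at each stage --- reduces the main term to $\int_{\R^r_{\ge 0}}R\,e^{-tQ}=I(t)$ and leaves only $r$ remainder terms. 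Each of these is controlled through \eqref{eq:eulermaclaruinrem} and the Gaussian moment identity $\int_{-\infty}^{\infty}|x^k|e^{-tx^2}\,dx=t^{-(k+1)/2}\Gamma\bigl(\tfrac{k+1}{2}\bigr)$, exactly as in the proof of Lemma~\ref{lem:eulmacprlm}: since $\partial_{x_i}^2\bigl(R\,e^{-tQ}\bigr)$ is a polynomial of degree $\le 2m$ times $e^{-tQ}$ whose worst term carries no power of $t$ and has degree $\le 2m-2$, integrating in the already-processed variables and summing over the remaining lattice variables yields a bound of order $t^{-(n-2)/2}=O(t^{1-n/2})$. Hence $S(t)=I(t)+O(t^{1-n/2})$; multiplying by $t^{n/2}$, and noting that the rescaling $x\mapsto t^{-1/2}x$ shows $I(t)\sim c\,t^{-n/2}$, so that $t^{n/2}I(t)=O(1)$, gives \eqref{eq:httremaprx}.

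The main obstacle is the bookkeeping in this last step: making the iterated (equivalently, multivariate) Euler--Maclaurin expansion precise, keeping the error estimates uniform as each one-dimensional application is carried out with the other variables frozen, and, crucially, exploiting the vanishing of $d$ on $\partial K$. Without that vanishing the leading boundary term is only $O(t^{-(n-1)/2})$, i.e.\ $O(t^{1/2})$ after multiplication by $t^{n/2}$, which is too weak; it is precisely the zero of $d$ on the walls of $K$ that upgrades the error to $O(t)$.
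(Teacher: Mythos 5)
Your proof is correct, and its overall architecture is the same as the paper's: part~\eqref{item:torusvol} is the dual-lattice computation (the paper phrases it via the coroot lattice $\hat\Lambda$ and the parallelepiped $Q$ it spans, with $\vol(T)=(2\pi)^r\vol(Q)$ and $\vol(P)=1/\vol(Q)$; your version via $L=\ker(\exp|_\grt)=2\pi\hat\Lambda$ is the identical argument), and part~\eqref{item:htrldt} passes to coordinates with respect to the fundamental weights and converts the lattice sum to an integral by one-variable Euler--Maclaurin applied iteratively. Where you genuinely diverge is in the execution of the Euler--Maclaurin step, and your version is the sharper one. The paper invokes its Lemma~\ref{lem:eulmacprlm}, i.e.\ Euler--Maclaurin with $N=1$, whose conclusion carries only an $O(t^{1/2})$ relative error (and which, as stated, covers only even monomials $x^{2m}$, not the general polynomials in one variable that arise when the other variables are frozen); iterating it yields $t^{n/2}\bar Z(t)=t^{n/2}I(t)+O(t^{1/2})$, not the $O(t)$ asserted in \eqref{eq:httremaprx}. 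That weaker estimate still suffices for the main theorem, which only compares constant terms as $t\to0+$, so the paper's end result is unaffected --- but your argument is the one that actually delivers the lemma as stated: taking $N=2$ and observing that $d^2$ is divisible by $\langle\alpha_i,\lambda\rangle^2$ for each simple root $\alpha_i$, hence vanishes to second order on every wall $x_i=0$, kills both boundary terms $f_t(0)$ and $f_t'(0)$ and leaves an $N=2$ remainder of absolute size $O(t^{1-n/2})$. Your closing observation that the wall-vanishing of $d$ is precisely what upgrades $O(t^{1/2})$ to $O(t)$ is accurate and is not made explicit in the paper. The points you leave as ``bookkeeping'' --- uniformity of the one-variable estimates in the frozen variables and bounding the residual lattice sums of $|P|e^{-tQ}$ by the corresponding Gaussian integrals --- are routine and are treated no more carefully in the paper itself.
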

\begin{proof}
(\ref{item:htrldt}) By Proposition~\ref{prop:modihtr2} we have:
\[ t^{n/2}Z(t) =t^{n/2}\bar Z(t) + O(t),\]
where
\[ \bar Z(t) := \sum_{\lambda\in \Lambda\cap K} d(\lambda)^2 e^{-t\|\lambda\|^2}.\]
Let $(x_1,\dotsc,x_r)$ denote the component variables on $\grt^*$ relative to the fundamental weights $\{\lambda_i\}_{i=1}^r$, so that an arbitrary element $\lambda\in\mathfrak{t}^*$ is expressed as $\lambda=\sum_{i=1}^rx_i\lambda_i$. Then $\bar Z(t)$ is of the form:
\[
 \bar Z(t) =\sum_{x_1=0}^\infty\dotsb\sum_{x_r=0}^\infty d(x_1,\dotsc,x_r)^2 e^{-tq(x_1,\dotsc,x_r)},
\]
where $d$ and $q$ are homogeneous polynomials of degree $m$ and $2$, respectively. Applying Lemma~\ref{lem:eulmacprlm} iteratively to $\bar Z(t)$, we have:
\[ t^{n/2}\bar Z(t)  = t^{n/2}I(t) +O(t),\]
where 
\[
 I(t) = \int^\infty_{0}\dotsi\int^\infty_{0} d(x_1,\dotsc,x_r)^2 e^{-tq(x_1,\dotsc,x_r)}\,dx_1\dotsm dx_r.
\]
This is the integral~\eqref{eq:httrapprxint}.

(\ref{item:torusvol}) Recall our assumption that $G$ is compact, connected, simply connected, and semisimple. In this case the fundamental weights $\{\lambda_i\}_{i=1}^r$ form a basis for $\Lambda$. Let $\{ H_i \}_{i=1}^r$ be the simple coroots, that is, the vectors in $\mathfrak{t}$ that are dual to the fundamental weights relative to the inner product. Let $\hat \Lambda$ be the $\Z$-lattice spanned by the simple coroots. Then
\[ 2\pi\hat\Lambda:= \exp^{-1}\{e\}\cap\mathfrak{t}, \]
where $\exp:\mathfrak{g}\to G$ is the exponential map, and $e$ is the identity in $G$. 

Let $Q$ be the fundamental parallelepiped in $\grt$ formed by the simple coroots. Let $\mu_\grt$ be the Lebesgue measure on $\grt$ induced by the inner product. Since $\exp(2\pi H_i)=e$ for all simple coroots $H_i$,  we have
\[
 \vol(T) = (2\pi)^r\vol(Q).
\]
Meanwhile, because the lattice $\hat\Lambda$ and $\Lambda$ are dual to each other, we have
\[ \vol(P) = \frac{1}{\vol(Q)}.\]
Thus we have Equation~\eqref{eq:torusvol}.
\end{proof}

\begin{thm}
Let $G$ be a compact connected Lie group and $T$ its maximal torus. Let $\grg$ and $\grt$ be their Lie algebras, respectively. Let $\langle \cdot , \cdot \rangle$ denote an $\Ad(G)$-invariant inner product on $\grg$ and also the induced inner product on the dual space $\grg^*$. Endow $G$ and $T$ with the measures induced by the bi-invariant metric that is generated by $\langle \cdot , \cdot \rangle$. Then, with the quotient measure on $G/T$, we have:
\[
 \vol(G/T) =\prod_{\alpha\in\Phi^+}\frac{2\pi}{\langle \alpha,\rho\rangle},\]
where $\Phi^+$ is the set of the selected positive roots of $G$ and $\rho=\frac{1}{2}\sum\limits_{\alpha\in\Phi^+}\alpha$.
\end{thm}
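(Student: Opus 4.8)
The plan is to pin down $\vol(G/T)=\vol(G)/\vol(T)$ by comparing two asymptotic evaluations of $t^{n/2}Z(t)$ as $t\to0+$. Having reduced to the case where $G$ is compact, connected, simply connected and semisimple (Section~\ref{par:cptlgpstrth1}), I note that Weyl's law~\eqref{eq:weylslaw} applied to $M=G$ gives $t^{n/2}Z(t)=\vol(G)/(4\pi)^{n/2}+O(t)$, while Lemma~\ref{lem:htrldt}, through~\eqref{eq:httremaprx}, gives $t^{n/2}Z(t)=t^{n/2}I(t)+O(t)$ with $I(t)$ as in~\eqref{eq:httrapprxint}. So the whole problem reduces to evaluating $\lim_{t\to0+}t^{n/2}I(t)$ in closed form and then equating the two limits.

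To evaluate $I(t)$, first I would use~\eqref{eq:torusvol} to rewrite it as $I(t)=\frac{\vol(T)}{(2\pi)^{r}}\int_{K}d(\lambda)^2e^{-t\|\lambda\|^2}\,d\lambda$. Both $d(\lambda)^2=\prod_{\alpha\in\Phi^+}\langle\alpha,\lambda\rangle^2\big/\prod_{\alpha\in\Phi^+}\langle\alpha,\rho\rangle^2$ and $\|\lambda\|^2$ are invariant under the Weyl group $W$, which acts on $\grt^*$ by orthogonal transformations with fundamental domain $K$; hence the integral over $K$ equals $\tfrac{1}{|W|}$ times the integral over all of $\grt^*$. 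The decisive step is then to transport this integral over $\grt^*$ to a Gaussian integral over $\grg$ via the infinitesimal Weyl integration formula: for an $\Ad(G)$-invariant integrable function $f$ on $\grg$,
\[
 \int_{\grg}f(X)\,dX=\frac{\vol(G/T)}{|W|}\int_{\grt}f(H)\prod_{\alpha\in\Phi^+}\langle\alpha,H\rangle^2\,dH,
\]
which follows from the change of variables $(gT,H)\mapsto\Ad(g)H$ on $(G/T)\times\grt$: this map is $|W|$-to-one onto a dense open subset of $\grg$, and at $(eT,H)$ its differential is the identity on $\grt$ and $-\ad(H)|_{\grp}$ on $\grp=\grt^{\perp}\cong T_{eT}(G/T)$, so its Jacobian is $\lvert\det(\ad(H)|_{\grp})\rvert=\prod_{\alpha\in\Phi^+}\langle\alpha,H\rangle^2$ (identifying $H\in\grt$ with its image in $\grt^*$). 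Applying this with $f(X)=e^{-t\|X\|^2}$, which is $\Ad(G)$-invariant because $\langle\cdot,\cdot\rangle$ is, identifying $\grt\cong\grt^*$ isometrically so that the weight $\prod_{\alpha\in\Phi^+}\langle\alpha,H\rangle^2$ becomes $d(\lambda)^2\prod_{\alpha\in\Phi^+}\langle\alpha,\rho\rangle^2$, and cancelling the two factors of $|W|$, one obtains
\[
 I(t)=\frac{\vol(T)^2}{(2\pi)^{r}\vol(G)}\Bigl(\prod_{\alpha\in\Phi^+}\frac{1}{\langle\alpha,\rho\rangle^2}\Bigr)\int_{\grg}e^{-t\|X\|^2}\,dX,
\]
where $\vol(G/T)=\vol(G)/\vol(T)$ has been used.

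Finally, the Gaussian integral is elementary: $(\grg,\langle\cdot,\cdot\rangle)\cong\R^{n}$ yields $\int_{\grg}e^{-t\|X\|^2}\,dX=(\pi/t)^{n/2}$, so $t^{n/2}I(t)=\frac{\vol(T)^2\pi^{n/2}}{(2\pi)^{r}\vol(G)}\prod_{\alpha\in\Phi^+}\langle\alpha,\rho\rangle^{-2}$ is independent of $t$. Setting this equal to the limit $\vol(G)/(4\pi)^{n/2}$ from Weyl's law and rearranging gives $\vol(G)^2=(2\pi)^{\,n-r}\vol(T)^2\prod_{\alpha\in\Phi^+}\langle\alpha,\rho\rangle^{-2}$, using $(4\pi)^{n/2}\pi^{n/2}=(2\pi)^{n}$; since $n-r=2m=2\lvert\Phi^+\rvert$ by~\eqref{eq:dimgmtposrt}, the factor $(2\pi)^{n-r}$ distributes over the positive roots as $\prod_{\alpha\in\Phi^+}(2\pi)^2$, and taking positive square roots (legitimate since $\langle\alpha,\rho\rangle>0$ for $\alpha\in\Phi^+$, so every quantity in sight is positive) produces $\vol(G/T)=\vol(G)/\vol(T)=\prod_{\alpha\in\Phi^+}2\pi/\langle\alpha,\rho\rangle$. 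I expect the main obstacle to be the infinitesimal Weyl integration formula with exactly the right normalization — checking that the conjugation map is genuinely $|W|$-to-one onto a full-measure set and that, under it, the quotient measure on $G/T$ corresponds to Lebesgue measure on $\grt^{\perp}$, so that the normalizing constant is precisely $\vol(G/T)/|W|$; alternatively one may derive this identity from the classical Weyl integration formula on $G$ by a rescaling argument near the identity. Once the constants are matched, the remainder is bookkeeping with powers of $2\pi$.
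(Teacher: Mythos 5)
Your proposal is correct and follows essentially the same route as the paper: extend the integral over $K$ to all of $\grt^*$ by Weyl-group invariance, convert it to a Gaussian integral over $\grg$ via the infinitesimal Weyl integration formula (which the paper simply cites from Duistermaat--Kolk, Thm.~3.14.1, while you sketch its proof via the Jacobian of $(gT,H)\mapsto\Ad(g)H$), and compare the resulting constant with the one from Weyl's law. The normalization issue you flag is real but is exactly what the cited form of the integration formula settles, and your final bookkeeping with powers of $2\pi$ matches the paper's.
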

\begin{proof}
As explained in Section~\ref{par:cptlgpstrth1}, we may assume that $G$ is simply connected and semisimple. Let $W$ denote the Weyl group of $G$. The action of $W$ on $\mathfrak{t}^*$ preserves the set of roots and the inner product. This implies that  $d(\lambda)^2$ and $\|\lambda\|$, which appear in the integral~\eqref{eq:httrapprxint}, are preserved under the $W$-action; so it is possible to extend the domain of integration to all of $\grt^*$ as follows:
\[
I(t) = \frac{1}{|W|} \int_{\grt^*} d(\lambda)^2 e^{-t\|\lambda\|^2}\frac{\mu_{\grt^*}(\lambda)}{\vol(P)}.\]
Then, by Equation~\eqref{eq:torusvol},
\[
I(t) = \frac{1}{|W|} \frac{\vol(T)}{(2\pi)^r} \int_{\grt^*} d(\lambda)^2 e^{-t\|\lambda\|^2}\mu_{\grt^*}(\lambda).
\]

We now switch the domain of integration from $\grt^*$ to $\grt$ via the linear isomorphism
\[
\begin{array}{ccc}
 \grt^*&\rightarrow&\grt,\\
	\lambda&\mapsto&X_\lambda,
\end{array}
\]
where $X_\lambda$ is the vector that is dual to $\lambda$ relative to the inner product. This isomorphism is precisely through which the inner product on $\grt$ was transferred to $\grt^*$; in particular, the Jacobian determinant of this isomorphism is $1$. Moroever, we have: 
\[
 d(\lambda)^2= \frac{\prod_{\alpha\in\Phi^+}\langle \alpha,\lambda\rangle^2}{\prod_{\alpha\in\Phi^+}\langle \alpha,\rho\rangle^2} = \frac{\prod_{\alpha\in\Phi^+}\alpha(X_\lambda)^2}{\prod_{\alpha\in\Phi^+}\langle \alpha,\rho\rangle^2},
\]
and
\[
 e^{-t\|\lambda\|^2}=e^{-t\|X_\lambda\|^2}.
\]
Hence, 
\begin{equation}
 I(t) =\frac{1}{|W|} \frac{\vol(T)}{(2\pi)^r}\Bigl(\prod_{\alpha\in\Phi^+}\frac{1}{\langle \alpha,\rho\rangle^2}\Bigr) \int_{\grt} \Bigl(\prod_{\alpha\in\Phi^+}\alpha(X)^2\Bigr) e^{-t\|X\|^2}\,\mu_{\grt}(X), \label{eq:volfmlder1} 
\end{equation}
where $\mu_\grt$ is the Lebesgue measure on $\grt$ induced by the inner product.

Recall the Weyl integration formula, which states that 
\begin{equation}
 \int_{\grg} f(X)\, \mu_\grg(X) = \frac{1}{|W|}\frac{\vol(G)}{\vol(T)}\int_\grt f(X)\, \Bigl(\prod_{ \alpha\in\Phi^+}\alpha(X)^2\Bigr)\,\mu_\grt(X) \label{eq:wifexplct}
\end{equation}
for any function $f$ that is invariant relative to the adjoint action of $\grg$ on itself (see \mbox{Duistermaat} and \mbox{Kolk}~\cite{duistermaatkolk}*{Thm.~3.14.1, p.~185}). Here the measure $\mu_\grg$ on $\grg$ is again the Lebesgue measure  induced by the inner product. Substitution of $f(X)=e^{-t\|X\|^2}$ in Equation~\eqref{eq:wifexplct} and some rearranging of terms leads us to:
\[
 \int_\grt e^{-t\|X\|^2} \Bigl(\prod_{ \alpha\in\Phi^+}\alpha(X)^2\Bigr)\,\mu_\grt(X) = |W|\frac{\vol(T)}{\vol(G)} \int_{\grg} f(X)\, \mu_\grg(X).
\]
Implementing this on Equation~\eqref{eq:volfmlder1}, we get
\begin{equation}
 I(t) =  \frac{\vol(T)^2}{(2\pi)^r\vol(G)}\Bigl(\prod_{\alpha\in\Phi^+}\frac{1}{\langle \alpha,\rho\rangle^2}\Bigr)
	\int_{\grg}  e^{-t\|X\|^2} \,\mu_\grg(X). \label{eq:sslvolform}
\end{equation}
The last integral is just a Gaussian integral, which has the value:
\[
 \int_{\grg}  e^{-t\|X\|^2} \,\mu_\grg(X) = \Bigl( \frac{\pi}{t} \Bigr)^{n/2}.
\]
Therefore,
\begin{equation}
 I(t)= \frac{\vol(T)^2}{(2\pi)^r\vol(G)}\Bigl(\prod_{\alpha\in\Phi^+}\frac{1}{\langle \alpha,\rho\rangle^2}\Bigr)\Bigl( \frac{\pi}{t} \Bigr)^{n/2}.\label{eq:heattrint}
\end{equation}
Inserting this expression into Equation~\eqref{eq:httremaprx} and invoking Weyl's law (Equation~\eqref{eq:weylslaw}), we get
\[
 \frac{\vol(G)^2}{\vol(T)^2}=(2\pi)^{2m}\prod_{\alpha\in\Phi^+}\frac{1}{\langle \alpha,\rho\rangle^2}. 
\]
The inner product between two positive roots is nonnegative. Hence, 
\[
 \frac{\vol(G)}{\vol(T)}=(2\pi)^{m}\prod_{\alpha\in\Phi^+}\frac{1}{\langle \alpha,\rho\rangle}. 
\]
By Equation~\eqref{eq:dimgmtposrt}, we have
\[
 \frac{\vol(G)}{\vol(T)}=\prod_{\alpha\in\Phi^+}\frac{2\pi}{\langle \alpha,\rho\rangle}.
\]
This proves the formula.
\end{proof}

\section{Concluding Remarks}

As we have mentioned in the introduction, one significance of the present argument is that, having equipped with some key results of Lie theory and differential geometry, namely, the representation theory of compact Lie groups and Weyl's law, one can easily access Harish-Chandra's formula by an elementary means of the Euler-Maclaurin formula.

What seems to be of greater significance, however, lies in the question that it stirs up when one brings this approach to the Atiyah-Singer index theorem. Suppose $D$ is a Dirac operator on a closed spin manifold $M$. Let $\Ind(D)$ be the (graded) index of $D$. In its simplest setting, the Atiyah-Singer index theorem states that 
\[
 \Ind(D) =\int_{M}\hat A, \label{eq:atiyahsinger}
\]
where $\hat A$ is the Hirzebruch $\hat A$-class of $M$ (Atiyah and Singer~\cite{atiyahsinger}). Meanwhile, it is also true that
\[
\Ind(D) = \Str(e^{tD^2}), 
\]
where $\Str$ denotes the super trace (trace over the even domain minus the trace over the odd domain); this is known as the McKean-Singer formula (McKean and Singer~\cite{mckeansinger}*{p.~61}). Therefore,
\begin{equation}
  \Str(e^{tD^2})=\int_M\hat A.\label{eq:atsimc}
\end{equation}
At this point we may apply on the left-hand side the Euler-Maclaurin formula, which involves the power series
\[ \Td(x) = \frac{x}{1-e^{-x}}.\]
But this power series is what produces the $\hat A$-class, which is on the right-hand side of Equation~\eqref{eq:atsimc},  under the Chern-Weil homomorphism for the tangent bundle of $M$. It is a mystery (at least for me) whether this is just a fluke; or can the Euler-Maclaurin formula explain the appearance of the $\hat A$-class in the index formula? The author welcomes suggestions towards answering this question.

\begin{bibdiv} 
\begin{biblist}

%
\bib{atiyahsinger}{article}{
   author={Atiyah, M. F.},
   author={Singer, I. M.},
   title={The index of elliptic operators on compact manifolds},
   journal={Bull. Amer. Math. Soc.},
   volume={69},
   date={1963},
   pages={422--433},
   issn={0002-9904},
   review={\MR{0157392 (28 \#626)}},
}
%
\bib{bgv}{book}{
   author={Berline, Nicole},
   author={Getzler, Ezra},
   author={Vergne, Mich{\`e}le},
   title={Heat kernels and Dirac operators},
   series={Grundlehren Text Editions},
   note={Corrected reprint of the 1992 original},
   publisher={Springer-Verlag},
   place={Berlin},
   date={2004},
   pages={x+363},
   isbn={3-540-20062-2},
   review={\MR{2273508 (2007m:58033)}},
}
\bib{bump}{book}{
   author={Bump, Daniel},
   title={Lie groups},
   series={Graduate Texts in Mathematics},
   volume={225},
   publisher={Springer-Verlag},
   place={New York},
   date={2004},
   pages={xii+451},
   isbn={0-387-21154-3},
   review={\MR{2062813 (2005f:22001)}},
}

%

\bib{duistermaatkolk}{book}{
   author={Duistermaat, J. J.},
   author={Kolk, J. A. C.},
   title={Lie groups},
   series={Universitext},
   publisher={Springer-Verlag},
   place={Berlin},
   date={2000},
   pages={viii+344},
   isbn={3-540-15293-8},
   review={\MR{1738431 (2001j:22008)}},
}
%

\bib{eulerm1}{article}{
   author={L. Euler},
   title={Methodus universalis serierum convergentium summas quam proxime inveniendi},
   journal={Commentarii academie scientiarum
Petropolitanae},
   volume={8},
   date={1736},
   pages={147--158},
   reprint={
   title={Opera Omnia. Vol.~XIV},
   pages={101--107},
	}
}

\bib{eulerm2}{article}{
   author={L. Euler},
   title={Methodus universalis series summandi ulterius promota},
   journal={Commentarii academie scientiarum
Petropolitanae},
   volume={8},
   date={1736},
   pages={147--158},
   reprint={
   title={Opera Omnia. Vol.~XIV},
   pages={101--107},
	}
}
\bib{fegan}{article}{
   author={Fegan, H. D.},
   title={The heat equation and modular forms},
   journal={J. Differential Geom.},
   volume={13},
   date={1978},
   number={4},
   pages={589--602 (1979)},
   issn={0022-040X},
   review={\MR{570220 (81k:22006)}},
}

\bib{flensted-jensen}{article}{
   author={Flensted-Jensen, Mogens},
   title={Spherical functions of a real semisimple Lie group. A method of
   reduction to the complex case},
   journal={J. Funct. Anal.},
   volume={30},
   date={1978},
   number={1},
   pages={106--146},
   issn={0022-1236},
   review={\MR{513481 (80f:43022)}},
   doi={10.1016/0022-1236(78)90058-7},
}

%

\bib{harishchandra1}{article}{
   author={Harish-Chandra},
   title={Harmonic analysis on real reductive groups I: The theory of the
   constant term},
   journal={J. Functional Analysis},
   volume={19},
   date={1975},
   pages={104--204},
   review={\MR{0399356 (53 \#3201)}},
}

%

%
\bib{knapp}{book}{
   author={Knapp, Anthony W.},
   title={Lie groups beyond an introduction},
   series={Progress in Mathematics},
   volume={140},
   edition={2},
   publisher={Birkh\"auser Boston Inc.},
   place={Boston, MA},
   date={2002},
   pages={xviii+812},
   isbn={0-8176-4259-5},
   review={\MR{1920389 (2003c:22001)}},
}


\bib{macdonald}{article}{
   author={Macdonald, I. G.},
   title={The volume of a compact Lie group},
   journal={Invent. Math.},
   volume={56},
   date={1980},
   number={2},
   pages={93--95},
   issn={0020-9910},
   review={\MR{558859 (81h:22018)}},
   doi={10.1007/BF01392542},
}

\bib{emaclaurin}{book}{
   author={Colin Maclaurin},
   title={A Treatise of Fluxions},
   place={Edinburgh},
   date={1742},
}

\bib{mckeansinger}{article}{
   author={McKean, H. P., Jr.},
   author={Singer, I. M.},
   title={Curvature and the eigenvalues of the Laplacian},
   journal={J. Differential Geometry},
   volume={1},
   date={1967},
   number={1},
   pages={43--69},
   issn={0022-040X},
   review={\MR{0217739 (36 \#828)}},
}

\bib{minak}{article}{
   author={Minakshisundaram, S.},
   author={Pleijel, {\AA}.},
   title={Some properties of the eigenfunctions of the Laplace-operator on
   Riemannian manifolds},
   journal={Canadian J. Math.},
   volume={1},
   date={1949},
   pages={242--256},
   issn={0008-414X},
   review={\MR{0031145 (11,108b)}},
}

\bib{peterweyl}{article}{
   author={Peter, F.},
   author={Weyl, H.},
   title={Die Vollst\"andigkeit der primitiven Darstellungen einer
   geschlossenen kontinuierlichen Gruppe},
   language={German},
   journal={Math. Ann.},
   volume={97},
   date={1927},
   number={1},
   pages={737--755},
   issn={0025-5831},
   review={\MR{1512386}},
   doi={10.1007/BF01447892},
}

\bib{tdeulmac}{article}{
   author={Pukhlikov, A. V.},
   author={Khovanski{\u\i}, A. G.},
   title={The Riemann-Roch theorem for integrals and sums of
   quasipolynomials on virtual polytopes},
   language={Russian, with Russian summary},
   journal={Algebra i Analiz},
   volume={4},
   date={1992},
   number={4},
   pages={188--216},
   issn={0234-0852},
   translation={
      journal={St. Petersburg Math. J.},
      volume={4},
      date={1993},
      number={4},
      pages={789--812},
      issn={1061-0022},
   },
   review={\MR{1190788 (94c:14044)}},
}



%
%
%
\bib{weyl1}{article}{
   author={Weyl, H.},
   title={Theorie der Darstellung kontinuierlicher halb-einfacher Gruppen
   durch lineare Transformationen. I},
   language={German},
   journal={Math. Z.},
   volume={23},
   date={1925},
   number={1},
   pages={271--309},
   issn={0025-5874},
   review={\MR{1544744}},
   doi={10.1007/BF01506234},
}
%
%
%
\end{biblist}
\end{bibdiv}

\end{document}